\numberwithin{equation}{section}
\newtheorem{theorem}{Theorem}[section]
\DeclareMathOperator*{\sgn}{sgn}
\DeclareMathOperator*{\sinc}{sinc}
\begin{document}
\author{Alexander E. Patkowski}
\title{An application of Titchmarsh's theorem and the Salem equivalence}

\maketitle
\begin{abstract}We extend the equivalence of the Salem type for the Riemann hypothesis by application of Titchmarsh's theorem. Other equivalences to the Riemann hypothesis and notes on related Fourier integrals are provided. \end{abstract}

% AMS keywords (used in AMS journals)
\keywords{\it Keywords: \rm Titchmarsh theorem; Fourier integrals; Riemann Hypothesis}

% AMS subject classifications (used in AMS journals)
\subjclass{ \it 2010 Mathematics Subject Classification 26A33, 30B40.}

\section{Introduction}
The Riemann hypothesis is the claim that all the non-trivial zeros of $\zeta(s)$ are on the line $\Re(s)=\frac{1}{2},$ where $\zeta(s)=\sum_{n\ge1}n^{-s}$ is the Riemman zeta function [7].  The Salem equivalence of the Riemann hypothesis [5] states that the Riemann hypothesis can be true only if
\begin{equation} \int_{-\infty}^{\infty}\frac{e^{-\sigma x}f(x)}{e^{e^{y-x}}+1}dx=0,\end{equation} has no bounded solution other than $f(y)\equiv0,$ when $\frac{1}{2}<\sigma<1,$ $\sigma\in\mathbb{R}.$ This can be seen from first noting that (1.1) is in the form of a convolution (after multiplying by $e^{\sigma y}$),
$$(f\star g)(y):=\int_{-\infty}^{\infty}f(x)g(y-x)dx.$$ Salem then carefully selects a Mellin transform (see [6, pg.46, Theorem 28]) for the Riemann zeta function, which is analytic in the critical region $1>\Re(s)=\sigma>0.$ Namely, for $\Re(s)>0$ it is well-known [7, pg.21, Theorem 2.5] that
\begin{equation}\int_{0}^{\infty}\frac{x^{s-1}dx}{e^{x}+1}=\Gamma(s)\zeta(s)(1-2^{1-s}), \end{equation} 
where $\Gamma(s)$ is the classical gamma function which has no zeros.
A change of variable in (1.2), gives the integral
\begin{equation} \int_{-\infty}^{\infty}\frac{e^{(\sigma+it) x}}{e^{e^{x}}+1}dx=\Gamma(\sigma+it)\zeta(\sigma+it)(1-2^{1-\sigma-it}). \end{equation} Taking the Fourier transform of (1.1) , we see from (1.3) that if the Fourier transform [6] of $f(x)$ is given by
$$\mathfrak{F}f(t):=\frac{1}{\sqrt{2\pi}}\int_{-\infty}^{\infty}e^{-itx}f(x)dx,$$
then
$$\Gamma(\sigma-it)\zeta(\sigma-it)(1-2^{1-\sigma+it})\mathfrak{F}f(t)=0$$ when $\frac{1}{2}<\sigma<1,$ can only be true if $\mathfrak{F}f(t)=0$ should $\zeta(\sigma+it)$ have no zeros off the line $\Re(s)=\frac{1}{2}.$ Now [3, pg.419, Corollary 8.5] tells us that if $\mathfrak{F}f(t)=0$ then $f(x)$ is zero almost everywhere.
\par 
Throughout, we will use standard notation [6, pg.10] for $f\in L^{p}(a,b)$ to mean 
$$||f||_p:=\left(\int_{a}^{b}|f(x)|^{p}dx\right)^{1/p}<\infty.$$ We will also make regular use of Hilbert transforms, which we define to be [6, pg.120, eq.(5.1.11)]
$$\mathfrak{H}f(y):=\frac{1}{\pi}PV\int_{-\infty}^{\infty}\frac{f(x)}{x-y}dx,$$
for $f\in L^{2}(-\infty,\infty)$ where $PV$ means the principal value at $x=y.$ Moreover, [6, pg.120, eq.(5.1.11)] says that applying the Hilbert operator twice gives $\mathfrak{H}\mathfrak{H}f(y)=-f(y).$ As noted in [3, pg.435], the Fourier transform of $\mathfrak{H}f(x)$ is equal to $-i\sgn(t)\mathfrak{F}f(t).$ Here $\sgn(t)$ gives a positive sign when $t>0,$ a negative sign when $t<0,$ and $0$ otherwise.

\par
Thus far, it seems little has been done in the way of finding new routes of Salem's equivalence (1.1) toward the truth of the Riemann Hypothesis. Many papers (e.g. [2, 9]) have noted Wiener's work [8, pg.9, Theorem II], [4, pg.32], relating the kernel from (1.1) to the linear span of its translates being dense in $L^{1}(-\infty,\infty)$. The purpose of this paper is to extend the Riemann Hypothesis equivalence of the Salem type to other criteria by applying Titchmarsh's theorem, which we now state.
\begin{theorem}(Titchmarsh [6, pg.128, Theorem 95]) Alternative necessary and sufficient conditions that a complex-valued $h(x)\in L^{2}(-\infty,\infty)$ is the limit as $z\rightarrow x$ of a function $h(z)$ analytic in the upper half-plane $\Im(z)>0,$ such that
$$\int_{-\infty}^{\infty}|h(x+iy)|^2dx<C<\infty,$$
are (i) $h(x)=f(x)-ig(x)$ where $f$ and $g$ are Hilbert transforms of each other, \newline
(ii) The Fourier transform $\mathfrak{F}h(t)$ is null for $t<0.$
\end{theorem}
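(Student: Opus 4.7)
The plan is to treat condition (ii) as the pivot: I will show (ii) is equivalent to the existence of an analytic extension with uniformly $L^{2}$-bounded horizontal slices, and separately that (ii) $\iff$ (i). For (ii) $\Rightarrow$ extension the construction is explicit: set
\[ h(z) := \frac{1}{\sqrt{2\pi}} \int_{0}^{\infty} \mathfrak{F}h(t)\, e^{itz}\,dt, \qquad \Im z > 0. \]
Since $|e^{itz}| = e^{-ty}$, the integrand lies in $L^{1}\cap L^{2}$ for each fixed $y > 0$; analyticity follows by differentiation under the integral, and Plancherel gives
\[ \|h(\cdot + iy)\|_{2}^{2} = \int_{0}^{\infty} |\mathfrak{F}h(t)|^{2}\, e^{-2ty}\,dt \leq \|h\|_{2}^{2}, \]
which supplies the uniform bound. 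Sending $y \to 0^{+}$ recovers $h$ as the $L^{2}$ boundary value by dominated convergence on the Fourier side.

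For the converse I would fix $0 < y_{1} < y_{2}$ and apply Cauchy's theorem to $z \mapsto h(z)e^{-itz}$ on the rectangle with corners $\pm R + iy_{1}$ and $\pm R + iy_{2}$. The main obstacle lives here: the hypothesis $\int|h(x+iy)|^{2}\,dx \leq C$ gives no pointwise control on the vertical sides $h(\pm R + is)$, so one cannot simply send $R \to \infty$. The standard remedy is to average the rectangle identity over $R \in [R_{0}, R_{0}+1]$ and apply Fubini with Cauchy--Schwarz, bounding the mean-square vertical contribution by $\int_{R_{0}}^{R_{0}+1}\!\int_{y_{1}}^{y_{2}} |h(R+is)|^{2}\,ds\,dR$, whose convergence over disjoint unit intervals forces a subsequence $R_{n} \to \infty$ along which the side pieces vanish. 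After this passage the horizontal contributions produce
\[ \mathfrak{F}h_{y}(t) = e^{-ty}\,\mathfrak{F}h(t), \qquad h_{y}(x):=h(x+iy), \]
upon letting $y_{1} \to 0^{+}$. The hypothesis then reads $\int |\mathfrak{F}h(t)|^{2} e^{-2ty}\,dt \leq C$ uniformly in $y$, and sending $y \to +\infty$ with Fatou's lemma forces $\mathfrak{F}h(t) = 0$ a.e.\ on $\{t < 0\}$.

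Finally, (i) $\iff$ (ii) is pure Fourier bookkeeping. Writing $h = f - ig$ with $f, g$ real-valued and combining the conjugation symmetries $\mathfrak{F}f(-t) = \overline{\mathfrak{F}f(t)}$, $\mathfrak{F}g(-t) = \overline{\mathfrak{F}g(t)}$ with the identity $\mathfrak{F}(\mathfrak{H}f)(t) = -i\,\sgn(t)\,\mathfrak{F}f(t)$ recorded in the introduction, the vanishing of $\mathfrak{F}h(t) = \mathfrak{F}f(t) - i\mathfrak{F}g(t)$ on $\{t < 0\}$ becomes equivalent to $f = \mathfrak{H}g$ (equivalently $g = -\mathfrak{H}f$ by $\mathfrak{H}\mathfrak{H} = -\mathrm{id}$), which is exactly the assertion that $f$ and $g$ are Hilbert transforms of each other.
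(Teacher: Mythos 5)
Your proposal is correct, but note first that the paper never actually proves this statement: Theorem 1.1 is quoted from Titchmarsh [6, Theorem 95], and the only place the argument is rehearsed is inside the proof of Theorem 1.2, where the analogous claims for $\bar{I}_{\sigma,m}$ are sketched. Measured against that sketch, your route differs in one substantive place. The paper obtains condition (i) from analyticity by writing the Cauchy integral representation $h(x+iy)=h*P_y+ih*Q_y$ with the Poisson kernel $P_y$ and its conjugate $Q_y$ and letting $y\to 0$, and it obtains the support condition from the Parseval identity $\int|\mathfrak{F}h(t)|^2e^{-2ty}\,dt=\int|h(x+iy)|^2\,dx$ together with the same ``let $y\to\infty$'' squeeze you use at the end. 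You instead make (ii) the pivot: the analytic extension is built explicitly as the one-sided inverse Fourier integral $\frac{1}{\sqrt{2\pi}}\int_0^\infty\mathfrak{F}h(t)e^{itz}\,dt$, the converse is handled by a contour shift (with the standard averaging-in-$R$ device to kill the vertical sides --- precisely the point that the paper's bare appeal to Parseval silently absorbs), and (i) is then recovered by the same Fourier bookkeeping the paper uses, via $\mathfrak{F}(\mathfrak{H}f)(t)=-i\sgn(t)\mathfrak{F}f(t)$ and the conjugation symmetry of real-valued $f,g$. What your approach buys is that the $L^2$ boundary behaviour of the conjugate Poisson integral $h*Q_y\to -\mathfrak{H}h$ never has to be justified; what it costs is the contour-shifting lemma, which you correctly identify as the only genuine analytic obstacle and dispose of properly. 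One small point worth an explicit line: the orientation in (i)$\iff$(ii) --- whether vanishing on $t<0$ forces $f=\mathfrak{H}g$ or $g=\mathfrak{H}f$ --- depends on the sign conventions for $\mathfrak{H}$ and $\mathfrak{F}$; with the conventions fixed in the paper's introduction your computation comes out consistently, but the check should be recorded.
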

As it stands, it would seem to be a simple matter to apply (1.1)--(1.3) to criteria (ii) of Theorem 1.1. However, the condition $t<0$ adds some difficulty since the Riemann hypothesis requires us to consider $t\in\mathbb{R},$ since zeros are either on the line $\Re(s)=\frac{1}{2},$ or are symmetrical pairs about this line [7, pg.30]. To address this issue, we will create a new kernel which satisfies a Salem type equivalence, but which covers all possible symmetrical pairs when $t<0,$ $\frac{1}{2}<\sigma<1.$ The kernel we constructed to achieve this is given as $x>0,$
\begin{equation}k(x):=\sum_{n=1}^{\infty}d_{1/2}(n)e^{-nx}-x^{-3/2}\sqrt{\frac{\pi}{2}}\zeta\left(\frac{3}{2}\right)-x^{-1}\zeta\left(\frac{1}{2}\right).\end{equation}
where [7, pg.8] $d_r(n)=\sum_{d|n}d^{r}.$ Define the convolution integral $I_{\sigma}(z)$ to be for $f\in L^1(-\infty,\infty),$
$$I_{\sigma}(z):=(f(x)\star e^{\sigma x}k(e^{x}))(z)=\int_{-\infty}^{\infty}f(x)e^{\sigma(z-x)}k(e^{z-x})dx.$$ To apply Theorem 1.1, we will also need to construct a complex valued function. Since $k:\mathbb{R}\rightarrow\mathbb{R},$ and for real valued $f,$ $I_{\sigma}:\mathbb{R}\rightarrow\mathbb{R},$ we multiply by a complex exponential to define $\bar{I}_{\sigma,m}(z):=e^{-imz}I_{\sigma}(z)$ for a positive real number $m\in\mathbb{R}^{+}.$ The new function has the property $\bar{I}_{\sigma,m}:\mathbb{R}\rightarrow\mathbb{C},$ where $f(x)$ is chosen to be real-valued. To illustrate an example of this procedure applicable to Theorem 1.1, put
$$ \overline{\sinc}(x):=e^{-imx}\frac{\sin(x)}{x}=e^{-imx}\sinc(x),$$
where $\sinc(x)\in L^2(-\infty,\infty),$ and $\sinc:\mathbb{R}\rightarrow\mathbb{R},$ it follows that $\overline{\sinc}:\mathbb{R}\rightarrow\mathbb{C}.$ From [1, pg.252, eq.(44)], we have the known Hilbert transform
\begin{equation}\mathfrak{H}\frac{\sin(mx)}{x}=\frac{\cos(mx)-1}{x}. \end{equation}
The Hilbert transform of the real part is then computed to be
$$\begin{aligned}\mathfrak{H}\Re\left(\frac{e^{-imx}\sin(x)}{x}\right)&=\mathfrak{H}\frac{\cos(mx)\sin(x)}{x}\\
&= \mathfrak{H} \frac{1}{2x}\left(\sin((m+1)x)-\sin((m-1)x) \right) \\
&=\frac{1}{2x}\left(\cos((m+1)x)-1-\cos((m-1)x)+1)\right)\\
&=\frac{1}{x}\sin(mx)\sin(x) \\
&=\Im\left( \frac{e^{-imx}\sin(x)}{x}\right),\end{aligned}$$
by $\cos(x)-\cos(y)=-2\sin((x+y)/2)\sin((x-y)/2)$ and similar identities. Additionally, we have $$\mathfrak{F}e^{-imx}\frac{\sin(x)}{x}=0,$$ when $x<-m-1.$
\par
Now we may state our main theorem by using a modified form of Theorem 1.1 also given by Titchmarsh [6, pg.129, Theorem 96], which we will utilize in the following section. Recall the standard notation $f=O(g)$ to mean that $|f(x)|\le C g(x)$ for some constant $C>0.$
\begin{theorem}\label{thm:theorem 1.2} Let $\frac{1}{2}<\sigma<1.$ Alternative necessary and sufficient conditions that a complex-valued $\bar{I}_{\sigma,m}(x)\in L^{2}(-\infty,\infty)$ is the limit as $z\rightarrow x$ of a function $\bar{I}_{\sigma,m}(z)$ analytic in the upper half-plane $\Im(z)>0,$ such that
$$\int_{-\infty}^{\infty}|\bar{I}_{\sigma,m}(x+iy)|^2dx=O(e^{2my}),$$
are (i) $\bar{I}_{\sigma,m}(x)=v(x)-iw(x)$ where $v$ and $w$ are Hilbert transforms of each other, \newline
(ii) The Fourier transform $\mathfrak{F}\bar{I}_{\sigma,m}(t)$ is null for $t<-m.$ \newline
(iii) $\mathfrak{F}f(t)=0$ for $t<0,$ if the Riemann hypothesis is true.
\end{theorem}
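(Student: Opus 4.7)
The strategy is to obtain (i)$\Leftrightarrow$(ii) directly from Titchmarsh's Theorem 96 (the $O(e^{2my})$ variant of Theorem 1.1) and then to establish (ii)$\Leftrightarrow$(iii) under the Riemann hypothesis by producing an explicit product formula for $\mathfrak{F}\bar{I}_{\sigma,m}(t)$. Before applying Theorem 96 I would dispatch its standing hypotheses for $\bar{I}_{\sigma,m}$. Since $|\bar{I}_{\sigma,m}|=|I_{\sigma}|$, membership in $L^{2}(\mathbb{R})$ reduces by Young's inequality (with $f\in L^{1}$) to $e^{\sigma x}k(e^{x})\in L^{2}(\mathbb{R})$, and this holds because the two subtractions in (1.4) tame the singularities of $\sum_{n}d_{1/2}(n)e^{-nx}$ near $x=0^{+}$ (the remainder is $O(x^{-1/2})$) while $k$ decays super-polynomially at $\infty$; combined with $\tfrac{1}{2}<\sigma<1$ this yields the required $L^{2}$-bound.

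The central step is a Mellin--Fourier computation. Starting from the Dirichlet series identity $\sum_{n\ge 1}d_{1/2}(n)n^{-s}=\zeta(s)\zeta(s-\tfrac{1}{2})$ and integrating termwise against $x^{s-1}\,dx$ gives
\[
\int_{0}^{\infty}\sum_{n\ge 1}d_{1/2}(n)e^{-nx}\,x^{s-1}\,dx=\Gamma(s)\zeta(s)\zeta\bigl(s-\tfrac{1}{2}\bigr)\qquad\bigl(\Re(s)>\tfrac{3}{2}\bigr).
\]
The right-hand side has only two simple poles in $\Re(s)>\tfrac{1}{2}$, at $s=\tfrac{3}{2}$ (from $\zeta(s-\tfrac{1}{2})$) and at $s=1$ (from $\zeta(s)$); the subtractions in (1.4) are arranged so that, in the appropriate regularised sense, their Mellin contributions cancel exactly these residues, continuing the identity to $\int_{0}^{\infty}k(x)x^{s-1}\,dx=\Gamma(s)\zeta(s)\zeta(s-\tfrac{1}{2})$ on $\tfrac{1}{2}<\Re(s)<1$. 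Substituting $x=e^{u}$ and $s=\sigma-it$ reads this as the Fourier transform of $e^{\sigma u}k(e^{u})$, in analogy with the derivation of (1.3); the convolution theorem then yields
\[
\mathfrak{F}I_{\sigma}(t)=\mathfrak{F}f(t)\,\Gamma(\sigma-it)\zeta(\sigma-it)\zeta\bigl(\sigma-it-\tfrac{1}{2}\bigr),
\]
and the modulation $\bar{I}_{\sigma,m}(z)=e^{-imz}I_{\sigma}(z)$ shifts the Fourier variable so that $\mathfrak{F}\bar{I}_{\sigma,m}(t)=\mathfrak{F}I_{\sigma}(t+m)$.

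To conclude, I observe that under RH the shifted arguments $\sigma-i(t+m)$ and $\sigma-\tfrac{1}{2}-i(t+m)$ have real parts in $(\tfrac{1}{2},1)$ and $(0,\tfrac{1}{2})$, respectively, neither of which contains a zero of $\zeta$ (all non-trivial zeros lie on $\Re(s)=\tfrac{1}{2}$, all trivial zeros at negative even integers), while $\Gamma$ is entire and zero-free. The gamma-zeta factor is therefore non-vanishing on the real line, so (ii), namely $\mathfrak{F}\bar{I}_{\sigma,m}(t)=0$ for $t<-m$, is equivalent to $\mathfrak{F}f(t+m)=0$ for $t<-m$, i.e.\ $\mathfrak{F}f(s)=0$ for $s<0$, which is (iii). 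The $O(e^{2my})$ growth on horizontal lines then follows by a Parseval computation from the shifted Fourier representation just obtained.

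The main obstacle I anticipate is making the Mellin continuation of $k$ fully rigorous on the strip $\tfrac{1}{2}<\Re(s)<1$: one must verify that the two constants in (1.4) really match the residues $\Gamma(\tfrac{3}{2})\zeta(\tfrac{3}{2})=\tfrac{\sqrt{\pi}}{2}\zeta(\tfrac{3}{2})$ and $\Gamma(1)\zeta(\tfrac{1}{2})=\zeta(\tfrac{1}{2})$, up to the proper regularised interpretation of the Mellin transforms of $x^{-3/2}$ and $x^{-1}$, and that the resulting integral is absolutely convergent on the critical strip. A secondary technicality is establishing uniform bounds on $\Gamma(\sigma-it)\zeta(\sigma-it)\zeta(\sigma-it-\tfrac{1}{2})$ in $t$, via Stirling and standard convexity estimates, sufficient both to justify the Fourier manipulations and to confirm the exponential growth required by Theorem 96.
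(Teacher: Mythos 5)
Your proposal is correct and follows essentially the same route as the paper: Mellin inversion to identify $\mathfrak{F}(e^{\sigma x}k(e^{x}))$ with $\Gamma(\sigma-it)\zeta(\sigma-it)\zeta(\sigma-it-\tfrac{1}{2})$, the convolution theorem plus the modulation shift for the product formula, Titchmarsh's Theorem 96 for (i)$\Leftrightarrow$(ii) together with the Parseval computation for the $O(e^{2my})$ bound, and non-vanishing of the gamma--zeta factor under RH for (ii)$\Leftrightarrow$(iii). The only divergence is minor: the paper establishes $e^{\sigma x}k(e^{x})\in L^{2}(-\infty,\infty)$ via Plancherel, Stirling's formula, and the finite order of $\zeta$ in the critical strip rather than via your pointwise asymptotics of $k$ at $0$ and $\infty$; and on the technicality you flag, your residue $\Gamma(\tfrac{3}{2})\zeta(\tfrac{3}{2})=\tfrac{\sqrt{\pi}}{2}\zeta(\tfrac{3}{2})$ is indeed the correct value at $s=\tfrac{3}{2}$.
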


\section{Proof of the Theorem}
To obtain our theorem, we will need to derive $k$ and establish its properties related to $L^2(-\infty,\infty),$ in order to apply the initial statement of the theorem as well as Hilbert transforms. A similar function to $k$ was considered in [7, pg.160]. The modified form of Theorem 1.1 [6, Theorem 96], applies when replacing a complex-valued $h(z)$ by $e^{-imz}h(z).$ However, note in our case we are multiplying $e^{-imz}$ by a real-valued function to obtain a complex-valued function. This is permissible provided the function is $L^2(-\infty,\infty),$ and the resulting conditions are mostly the same.
\begin{proof}[Proof of Theorem~\ref{thm:theorem 1.2}]
As noted in the introduction, the functional equation [7, pg.16, eq.(2.1.13)] says the critical zeros are either on the line $\Re(s)=\frac{1}{2},$ or appear in pairs about the line $\Re(s)=\frac{1}{2}.$ If we set $\frac{1}{2}<\sigma<1,$ and if $\zeta(\sigma-it)\zeta(\sigma-\frac{1}{2}-it)\neq0$ for $t<0,$ we also have $\zeta(1-\sigma+it)\zeta(\frac{3}{2}-\sigma+it)\neq0$ for $t<0$ by [7, pg.16, eq.(2.1.13)]. This is equivalent to $\zeta(\sigma-it)\neq0$ for $\sigma\neq\frac{1}{2},$ $\sigma\in(0,1),$ $t\in\mathbb{R}.$

We construct the kernel $k$ by noting by the Mellin inversion formula for $\Re(s)=c>\Re(r)+1,$ $x>0,$
\begin{equation} \sum_{n=1}^{\infty}d_{r}(n)e^{-nx}=\frac{1}{2\pi i}\int_{(c)}\Gamma(s)\zeta(s)\zeta(s-r)x^{-s}ds,\end{equation}
by [7, pg.8, eq.(1.3.1)]. Since we wish to obtain a Mellin transform which is analytic in the critical region for $\zeta(s),$ we set $r=\frac{1}{2}$ and move the contour to the left to $\Re(s)=d,$ $0<d<1.$ Computing the residue at the pole $s=\frac{3}{2}$ from $\zeta(s-\frac{1}{2}),$ and the pole $s=1$ from $\zeta(s),$ we get 
\begin{equation} k(x)=\frac{1}{2\pi i}\int_{(d)}\Gamma(s)\zeta(s)\zeta\left(s-\frac{1}{2}\right)x^{-s}ds,\end{equation}
which is easily seen to be equivalent to
$$\int_{-\infty}^{\infty}e^{(\sigma+it)x}k(e^{x})dx=\Gamma(\sigma+it)\zeta(\sigma+it)\zeta\left(\sigma+it-\frac{1}{2}\right).$$
Plancherel's theorem [3, pg.419, Theorem 8.6] for Fourier transforms tell us that 
$$\int_{-\infty}^{\infty}|e^{\sigma x}k(e^{x})|^2dx=\int_{-\infty}^{\infty}\bigg|\Gamma(\sigma-ix)\zeta(\sigma-ix)\zeta\left(\sigma-ix-\frac{1}{2}\right)\bigg|^2dx.$$
The integral on the right side is bounded by Stirling's formula [7, pg.160] 
$$|\Gamma(\sigma+it)|=e^{-\frac{\pi}{2}|t|}|t|^{\sigma-\frac{1}{2}}\sqrt{2\pi}\big(1+O(t^{-1})\big),$$
and $\zeta(s)$ having finite order in the critical strip [7, pg.95]. Therefore, the isometric property of the Fourier transform [3, pg.421, Theorem 8.9] on $L^2(-\infty,\infty)$ tells us that $e^{\sigma x}k(e^{x})\in L^2(-\infty,\infty).$ Now Minkowski's inequality for integrals [3, pg.323, Theorem 5.60] tells us that [3, pg.346, Proposition 6.14, (b)],
$$||f\star e^{\sigma x}k(e^{x})||_2\le ||f||_1||e^{\sigma x}k(e^{x})||_2.$$
Hence if we assume $f\in L^1(-\infty,\infty),$ then $f\star e^{\sigma x}k(e^{x})\in L^2(-\infty,\infty).$
\par If we assume the initial criteria that $\bar{I}_{\sigma,m}(z)$ is analytic for $\Im(z)>0,$ then we have the Cauchy integral representation by [6, Theorem 93], [6, pg.123, eq.(5.3.4)--(5.3.6)],
$$\begin{aligned}\bar{I}_{\sigma,m}(x+iy)&=\frac{1}{i\pi}\int_{-\infty}^{\infty}\frac{\bar{I}_{\sigma,m}(t)}{t-(x+iy)}dt\\
&=\bar{I}_{\sigma,m}(x)*P_{y}+i\bar{I}_{\sigma,m}(x)*Q_{y}.\end{aligned}$$
Here we have used the Poisson kernel,
$$P_{y}(x)=\frac{1}{\pi}\frac{y}{y^2+x^2},$$
and its conjugate kernel 
$$Q_{y}(x)=\frac{1}{\pi}\frac{x}{y^2+x^2}.$$
The condition $(i)$ follows from letting $y\rightarrow0$ to get $\bar{I}_{\sigma,m}(x)*P_{y}\rightarrow v(x),$ and $\bar{I}_{\sigma,m}(x)*Q_{y}\rightarrow -w(x),$ and then comparing $w(x)$ with the definition of the Hilbert transform.

To see how condition (ii) follows from (i), observe that by (ii) if $x<-m,$ 
$$\begin{aligned}\mathfrak{F}\bar{I}_{\sigma,m}(x)&=\mathfrak{F}v(-x)-i\mathfrak{F}\mathfrak{H}v(-x)\\
&=0,\end{aligned}$$
by [6, pg.120, eq.(5.1.8)] and $\mathfrak{F}\mathfrak{H}v(-x)=-i\sgn (-x)\mathfrak{F}v(-x).$ The converse follows from the formulae for $v$ and $w$ at the end of the page [6, pg.119], which we leave for the reader. Note that Parseval's theorem [6, pg.127] tells us that
$$\int_{-\infty}^{\infty}|\mathfrak{F}\bar{I}_{\sigma,m}(x)|^2e^{-2xy}dx=\int_{-\infty}^{\infty}|\bar{I}_{\sigma,m}(x+iy)|^2dx.$$
And since we must have
$$e^{m2y}\int_{-\infty}^{-m}|\mathfrak{F}\bar{I}_{\sigma,m}(x)|^2dx\le \int_{-\infty}^{-m}|\mathfrak{F}\bar{I}_{\sigma,m}(x)|^2e^{-2xy}dx\le C,$$
as $y\rightarrow\infty,$ it follows that $\mathfrak{F}\bar{I}_{\sigma,m}(x)=0$ for $x<-m.$ On the other hand, this implies
$$\int_{-\infty}^{\infty}|\mathfrak{F}\bar{I}_{\sigma,m}(x)|^2e^{-2xy}dx\le e^{m2y}\int_{-m}^{\infty}|\mathfrak{F}\bar{I}_{\sigma,m}(x)|^2dx\le Ce^{2my},$$
and the upper bound follows in the initial statement from (ii). For the equivalence between (iii) and (ii), observe that 
$$\mathfrak{F}\bar{I}_{\sigma,m}(t)=\Gamma(\sigma-i(t+m))\zeta(\sigma-i(t+m))\zeta\left(\sigma-i(t+m)-\frac{1}{2}\right)\mathfrak{F}f(t+m).$$ The right side is zero in the region $\frac{1}{2}<\sigma<1,$ $t<-m$ if the Riemann zeta function has no zeros off the line $\Re(s)=\frac{1}{2}$ and $\mathfrak{F}f(t)=0$ for $t<0.$ Otherwise, the Riemann Hypothesis would be false.\end{proof}

\begin{section}{Related observations}
Another result that is deduced from [6, Theorem 96], is obtained from the upper bound in Theorem 1.2. Namely, by taking logarithms of both sides, and setting $m$ to be the smallest number for which $\mathfrak{F}\bar{I}_{\sigma,m}(x)=0$ such that $x<-m,$ then we have
\begin{equation}\lim_{y\rightarrow\infty}\frac{1}{y}\log\int_{-\infty}^{\infty}|\bar{I}_{\sigma,m}(x+iy)|^2dx=2m.\end{equation}
A condition for the Riemann hypothesis would be that (3.1) implies $m$ is the smallest number for which $\mathfrak{F}f(x+m)=0,$ such that $x<-m,$ or simply that $\mathfrak{F}f(x)=0,$ such that $x<0.$
\par It is also possible to use our $I_{\sigma}(z)$ function to apply [3, pg.419, Theorem 8.6] to obtain the Salem type equivalence. Note the one-one property was not applied in Theorem 1.2 to relate $f$ directly to the Riemann hypothesis due to the condition $t<-m$ on its Fourier transform. However, the Riemann hypothesis would be true should no bounded measurable function $f$ satisfy 
$$(f(x)\star e^{\sigma x}k(e^{x}))(z)=\int_{-\infty}^{\infty}f(x)e^{\sigma(z-x)}k(e^{z-x})dx=0,$$
other than the trivial case $f\equiv0.$ Moreover, by [8, pg.9, Theorem II], the set of all translations of $e^{\sigma x}k(e^{x})$ is closed in $L^1(-\infty,\infty)$ if the Riemann hypothesis is true.
\par We next consider results of Paley and Wiener [4, pg.16, Theorem XII] and [4, pg.24] by first writing
$$\bar{I}_{\sigma,m}(-t)=\int_{-\infty}^{\infty}e^{-ixt}\Gamma(\sigma-i(x+m))\zeta(\sigma-i(x+m))\zeta\left(\sigma-i(x+m)-\frac{1}{2}\right)\mathfrak{F}f(x+m)dx.$$
Recall that $f$ is real-valued, and consequently if $t$ is real then $|\bar{I}_{\sigma,m}(-t)|=|I_{\sigma,m}(-t)|$ is real and non-negative. If we put 
$$f_1(x):=\Gamma(\sigma-i(x+m))\zeta(\sigma-i(x+m))\zeta\left(\sigma-i(x+m)-\frac{1}{2}\right)\mathfrak{F}f(x+m),$$ then
as before $f_1(x)=0,$ for $x<-m$ implies $\mathfrak{F}f(x)=0$ for $x<0$ if the Riemann hypothesis is true. Define $f_2(x)$ to be zero if $x>m,$ and non-zero otherwise. Then by [4, pg.24], we have the Fourier transform of $f_1f_2,$
\begin{equation}\int_{-m}^{m}e^{-itx}f_1(x)f_2(x)dx=\int_{-\infty}^{\infty}\bar{I}_{\sigma,m}(-x)\mathfrak{F}f_2(t-x)dx,\end{equation}
if
\begin{equation}\int_{-\infty}^{\infty}\frac{|\log|\bar{I}_{\sigma,m}(-x)||}{1+x^2}dx<\infty,\quad\text{and} \int_{-\infty}^{\infty}\frac{|\log|\mathfrak{F}f_2(x)||}{1+x^2}dx<\infty.\end{equation}
\begin{theorem} If the Fourier transform (3.2) implies $\mathfrak{F}f(x)=0$ for $x<0,$ then the Riemann hypothesis is true. A sufficient condition for (3.3) follows if
$$\bar{I}_{\sigma,m}(-x)=O(e^{-|x|^{\epsilon-1}}),$$
for $1<\epsilon<2,$ and similarly for $\mathfrak{F}f_2.$
\end{theorem}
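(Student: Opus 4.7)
The argument naturally splits according to the two assertions of the theorem.

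For the first implication, the plan is to exploit the Paley--Wiener structure of (3.2). Since $f_2(x)=0$ for $x>m$, the product $f_1 f_2$ has Fourier transform of exponential type at most $m$, and the compact integration range $[-m,m]$ on the left-hand side of (3.2) encodes the further vanishing $f_1(x)=0$ for $x<-m$. Using the factorization
$$\mathfrak{F}\bar{I}_{\sigma,m}(t)=\Gamma(\sigma-i(t+m))\zeta(\sigma-i(t+m))\zeta\!\left(\sigma-i(t+m)-\tfrac{1}{2}\right)\mathfrak{F}f(t+m)$$
recorded in the proof of Theorem~\ref{thm:theorem 1.2}, I would argue contrapositively: were $\zeta(s)$ to possess an off-line zero in the strip $\tfrac12<\Re(s)<1$, the resulting extra vanishing of the Gamma-zeta factor on a symmetric pair of horizontal lines (via the functional equation as at the start of Section 2) would permit a non-trivial real $f$ whose Fourier transform fails to vanish on $(-\infty,0)$ to still satisfy the structural constraint of (3.2), contradicting the hypothesis. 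Thus the hypothesis precludes any such off-line zero.

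For the second assertion, I would verify the two conditions in (3.3) by a direct size estimate. From $\bar{I}_{\sigma,m}(-x)=O(e^{-|x|^{\epsilon-1}})$ one extracts $|\log|\bar{I}_{\sigma,m}(-x)||=O(|x|^{\epsilon-1})$ outside a bounded neighbourhood of the origin, and then
$$\int_{-\infty}^{\infty}\frac{|\log|\bar{I}_{\sigma,m}(-x)||}{1+x^2}\,dx \;\ll\; 1+\int_{1}^{\infty}\frac{x^{\epsilon-1}}{1+x^2}\,dx \;\asymp\; \int_{1}^{\infty} x^{\epsilon-3}\,dx,$$
which converges precisely for $\epsilon<2$, while the condition $\epsilon>1$ is exactly what makes $e^{-|x|^{\epsilon-1}}$ genuinely decay at infinity. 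The same size estimate applied to $\mathfrak{F}f_2$ disposes of the companion condition in (3.3), so the Paley--Wiener theorem of [4, pg.24] applies and (3.2) is legitimate.

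The main obstacle is the first part: Theorem~\ref{thm:theorem 1.2}(iii) is stated only as a consequence of the Riemann hypothesis, so the reverse direction cannot be invoked as a black box and instead must be built up through the Salem--Wiener style density argument of the introduction together with the displayed factorization. A secondary technical nuisance is that the $O$-hypothesis on $\bar{I}_{\sigma,m}$ controls $\log|\bar{I}_{\sigma,m}(-x)|$ only from above, so a supplementary lower bound --- obtainable from Stirling together with classical vertical growth estimates for $\zeta$ on the lines $\Re(s)=\sigma$ and $\Re(s)=\sigma-\tfrac12$ --- will be needed to prevent the log-integrand from blowing up wherever $|\bar{I}_{\sigma,m}(-x)|$ happens to be very small.
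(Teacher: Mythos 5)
Your handling of the estimate is essentially the paper's own argument: the paper likewise reduces (3.3) to the convergence of $\int_{-\infty}^{\infty}|x|^{\epsilon-1}(1+x^2)^{-1}\,dx$, which it evaluates as $\pi/\sin(\epsilon\pi/2)$ for $0<\epsilon<2$, and then asserts that a bound $\log|\bar{I}_{\sigma,m}(-x)|\le -C|x|^{\epsilon-1}$ suffices. Two points of divergence are worth recording. First, the paper's proof of the first assertion is the single clause ``since we have already shown how to derive (3.2)''; it rests entirely on the discussion preceding the theorem and the factorization of $\mathfrak{F}\bar{I}_{\sigma,m}$ from Section 2. Your contrapositive, Salem-style argument therefore attempts work the paper does not do, and as written it remains a sketch: given an off-line zero $\sigma+it_0$ you would still have to exhibit a bounded nontrivial $f$, with $\mathfrak{F}f$ not vanishing on $(-\infty,0)$, that is annihilated by the convolution --- that construction is the delicate half of Salem's equivalence and is not supplied by the functional-equation remark alone.

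Second, the issue you relegate to a ``secondary technical nuisance'' is in fact the crux, and your extraction $|\log|\bar{I}_{\sigma,m}(-x)||=O(|x|^{\epsilon-1})$ does not follow from the hypothesis. An upper bound $|\bar{I}_{\sigma,m}(-x)|\le Ce^{-|x|^{\epsilon-1}}$ gives, for large $|x|$,
$$|\log|\bar{I}_{\sigma,m}(-x)||=-\log|\bar{I}_{\sigma,m}(-x)|\ \ge\ |x|^{\epsilon-1}-\log C,$$
a bound on the integrand of (3.3) from \emph{below}, whereas convergence of (3.3) needs a bound from \emph{above}, i.e.\ a pointwise \emph{lower} bound on $|\bar{I}_{\sigma,m}(-x)|$. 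The paper commits exactly the same sign slip, so you have reproduced its reasoning faithfully, but your proposed repair --- Stirling plus vertical growth estimates for $\zeta$ --- cannot furnish such a lower bound: $\bar{I}_{\sigma,m}(-x)$ is, up to a phase, the convolution $f\star e^{\sigma x}k(e^{x})$ with $f$ an essentially arbitrary element of $L^1(-\infty,\infty)$, and nothing prevents it from vanishing or decaying faster than $e^{-|x|^{\epsilon-1}}$ at individual points. The statement only becomes provable if the hypothesis is read as a two-sided estimate $|\bar{I}_{\sigma,m}(-x)|\asymp e^{-|x|^{\epsilon-1}}$ (or at least a matching lower bound away from a set where the log is controlled), after which your computation $\int_{1}^{\infty}x^{\epsilon-3}\,dx<\infty$ for $\epsilon<2$ does close the argument exactly as in the paper.
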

\begin{proof}Since we have already shown how to derive (3.2), we prove the estimate. Notice that [6, pg.181, eq.(7.3.7)],
\begin{equation} \int_{-\infty}^{\infty}\frac{|x|^{\epsilon-1}}{1+x^2}dx=2\int_{0}^{\infty}\frac{x^{\epsilon-1}}{1+x^2}dx,\end{equation}
is twice $\pi/\sin(\epsilon\pi/2)$ for $0<\epsilon<2.$ When comparing with (3.3), and the fact that $\bar{I}_{\sigma,m}(-x)\in L^2(-\infty,\infty),$ we see that it is sufficient to have for $2>\epsilon>1,$
$$\log|\bar{I}_{\sigma,m}(-x)|\le -C|x|^{\epsilon-1},$$ and similar for $\mathfrak{F}f_2.$ Since $e^{-|x|^{\delta}}\in L^2(-\infty,\infty),$ $\delta>0,$ the result follows.
\end{proof}
\end{section}

1390 Bumps River Rd. \\*
Centerville, MA
02632 \\*
USA \\*
ul. A. E. Ody\'{n}ca 47 \\*
02-606 Warsaw\\*
Poland\\*
E-mail: alexpatk@hotmail.com, alexepatkowski@gmail.com

\end{document}